\numberwithin{equation}{section}
\theoremstyle{definition}
\newtheorem{thm}{Theorem}[section]
\newtheorem{lem}[thm]{Lemma}
\newtheorem{prop}[thm]{Proposition}
\newtheorem{rem}[thm]{Remark}
\newtheorem*{conj}{Conjecture}
\DeclareMathOperator{\codim}{\mathrm{codim}}
\DeclareMathOperator{\NL}{\mathrm{NL}}
\DeclareMathOperator{\red}{\mathrm{red}}
\DeclareMathOperator{\Ima}{\mathrm{Im}}
\DeclareMathOperator{\Hilb}{\mathrm{Hilb}}
\DeclareMathOperator{\op}{\mathcal{O}_{\mathbb{P}^3}(d)}
\DeclareMathOperator{\p3}{\mathbb{P}^3}
\DeclareMathOperator{\pr}{\mathrm{pr}}
\DeclareMathOperator{\N}{\mathcal{N}}
\DeclareMathOperator{\T}{\mathcal{T}}
\DeclareMathOperator{\I}{\mathcal{I}}
\DeclareMathOperator{\mo}{\mathcal{O}}
\newcommand{\mr}[1]{\mathrm{#1}}
\newcommand{\mb}[1]{\mathbb{#1}}
\newcommand{\mc}[1]{\mathcal{#1}}
\newcommand{\ov}[1]{\overline{#1}}
\begin{document}

\title{On a conjecture of Harris}

\author[A. Dan]{Ananyo Dan}

\address{School of Mathematics and Statistics, University of Sheffield, Hicks building, Hounsfield Road, S3 7RH, UK}

\email{a.dan@sheffield.ac.uk}

\subjclass[2010]{$14$D$07$, $14$C$05$, $14$C$20$}

\keywords{Harris conjecture, Noether-Lefschetz locus, Hodge locus, flag Hilbert schemes}

\date{\today}

\begin{abstract}
 For $d \ge 4$, the Noether-Lefschetz locus $\NL_d$ parametrizes smooth, degree $d$ 
  surfaces in $\p3$ with Picard number at least $2$. 
%Using Lefschetz $(1,1)$-theorem,
%  $\NL_d$ can be considered as the union of Hodge loci corresponding to the cohomology classes 
%  of divisors in such surfaces. By an \emph{irreducible component of} $\NL_d$, we mean that 
%  as a topological space it is irreducible and as a \emph{scheme} (scheme structure as a Hodge locus) is not properly contained in 
%  another irreducible component of $\NL_d$.
 A conjecture of Harris states that there 
 are only finitely many irreducible components of the Noether-Lefschetz locus of non-maximal codimension.
 Voisin showed that the conjecture is false for sufficiently large $d$, but is true for $d \le 5$.
 She also showed that for $d=6, 7$, there are finitely many \emph{reduced}, irreducible components
 of $\NL_d$ of non-maximal codimension.
 In this article, we prove that for any $d \ge 6$, there are infinitely many \emph{non-reduced}
 irreducible components of $\NL_d$ of non-maximal codimension.
 \end{abstract}

\maketitle

\section{Introduction}
The underlying field is $\mathbb{C}$. By surface we will always mean a projective surface in $\p3$.
A classical result in the theory of surfaces, stated by M. Noether and later proved by Lefschetz, says that
for any $d \ge 4$, a very general, smooth, degree $d$ surface in $\p3$ is of Picard number $1$ 
(by \emph{Picard number} we mean the rank of the N\'{e}ron-Severi group).
Here, \emph{very general} means that the points on the parametrizing space $\mathbb{P}(H^0(\mathbb{P}^3, \op))$ of 
degree $d$ surfaces in $\p3$, corresponding to such surfaces,
lie outside a countable union of proper, closed subsets of $\mathbb{P}(H^0(\mathbb{P}^3, \op))$. 
The \emph{Noether-Lefschetz locus}, denoted $\NL_d$, is then defined to be the locus of 
smooth, degree $d$ surfaces in $\p3$ with Picard number at least $2$. 
The irreducible components of the Noether-Lefschetz locus have a natural (analytic) scheme structure, which we will now describe.

Denote by $U_d \subseteq \mathbb{P}(H^0(\mathbb{P}^3, \op))$ 
the open subscheme parametrizing smooth projective hypersurfaces in $\mathbb{P}^3$ of degree $d$.
Let \[\pi: \mathcal{X} \to U_d\] be the corresponding universal family. For a given $u \in U_d$, denote by $\mc{X}_u:=\pi^{-1}(u)$. 
Denote by \[\mb{H}:=R^2\pi_*\mathbb{Z} \mbox{ and } \mc{H}:=R^2\pi_*\mathbb{Z} \otimes \mo_{U_d}.\]
Using the Ehresmann's theorem, it is easy to check that $\mb{H}$ is a local system, hence $\mc{H}$ is a vector bundle. 
Fix a point $o \in \NL_d$ and $U \subseteq U_d$ a simply connected open neighbourhood of $o$ in $U_d$ (under the analytic topology).
It is easy to check that the restriction of $\mb{H}$ to $U$ is trivial and any class 
$\gamma_0 \in H^2(\mc{X}_o,\mathbb{Z})$ defines by flat transform, a section $\gamma \in \Gamma(U,\mb{H})$.
Let $\overline{\gamma}$ be the image
of $\gamma$ in $\mathcal{H}/F^1\mathcal{H}$, where 
$F^1\mc{H} \subset \mc{H}$ is a vector subbundle such that
for every $u \in U$, the fiber $(F^1\mc{H})_u \subset \mc{H}_u$ can be identified with $F^1 H^2(\mc{X}_u,\mb{C}) \subset
H^2(\mc{X}_u,\mb{C})$ (see \cite[\S $10.2.1$]{v4}).
If $\gamma_0$ belongs to $H^{1,1}(\mc{X}_o,\mb{C})$ i.e., $\gamma_0$ is a Hodge class, then 
the \emph{Hodge locus associated to} $\gamma_0$, denoted $\NL(\gamma_0)$, is defined as 
\[ \NL(\gamma_0):=\{u \in U | \overline{\gamma}(u)=0\},\]
where $\overline{\gamma}(u)$ denotes the value at $u$ of the section $\overline{\gamma}$.
The Hodge locus is equipped with a natural scheme structure (see \cite[\S $5.3.1$]{v5}).
The intersection of $\NL_d$ with $U$ is the union of $\NL(\gamma_0)$ as $\gamma_0$ ranges over the 
Hodge classes of $\mc{X}_u$ for all $u \in U$, such that the Hodge class is not a multiple of $c_1(\mo_{\mc{X}_u}(1))$ (see \cite[\S $5.3.3$]{v5}).
We say that the closure $\ov{\NL(\gamma_0)}$ of $\NL(\gamma_0)$ (in the Zariski topology), is 
an \emph{irreducible component} of $\NL_d$, if the underlying topological space is 
irreducible and as an (analytic) \emph{scheme} is not contained properly in any irreducible component of 
$\ov{\NL(\gamma')}$ for some Hodge class $\gamma'$ over $\mc{X}_o$, where $\gamma'$ is not a multiple of $c_1(\mo_{\mc{X}_o}(1))$. 
Two irreducible components 
of $\NL_d$ are isomorphic if they are isomorphic as analytic schemes (scheme structure as Hodge loci).
It was shown by Ciliberto-Harris-Miranda \cite{ca1} that 
any irreducible component $L$ of $\NL_d$ satisfies the inequality:
\[d-3 \le \codim(L,U_d) \le \binom{d-1}{3}.\]
If $\codim(L,U_d) = \binom{d-1}{3}$, then $L$ is called a \emph{general component}. Otherwise, $L$ is called a \emph{special component}.
Harris conjectures the following on the special components of the Noether-Lefschetz locus (see \cite{voicontr, M3}):

\begin{conj}[Harris]
Fix an integer $d \ge 5$. Then,
\begin{enumerate}
 \item{\bf{Topological Harris conjecture}:} Ignoring the natural analytic scheme structure (as Hodge locus)
 on the irreducible components of $\NL_d$, there are finitely many topological, special components of $\NL_d$. 
 
 \item{\bf{Analytic Harris conjecture}:} The Noether-Lefschetz locus $\NL_d$ contains finitely many special, irreducible components (by irreducible
 component we mean as above).
\end{enumerate}
\end{conj}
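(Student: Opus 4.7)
The plan is to attach to every irreducible component $L$ of $\NL_d$ a pair of numerical invariants $(e,g)$, bound these invariants under the hypothesis that $L$ is special, and then invoke finiteness of the relevant flag Hilbert scheme. This addresses both parts of the conjecture simultaneously; the difference between the topological and analytic versions surfaces only in the main obstacle.

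\emph{Step 1: Numerical invariants.} Let $L$ be a special component of $\NL_d$, $o \in L$ very general, and let $\gamma \in H^{1,1}(\mc{X}_o, \mathbb{Z})$ be a generator of the Hodge class cutting out $L$, modulo multiples of the hyperplane class. By the Lefschetz $(1,1)$-theorem, $\gamma$ is represented (up to sign) by an effective divisor $C_o \subset \mc{X}_o$. Choose a minimal such representative within its class and set $e := \deg C_o$, $g := p_a(C_o)$. Distinct invariants force distinct components, so the task is reduced to (a) bounding the set of admissible $(e,g)$ and (b) bounding the number of components realising each such pair.

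\emph{Step 2: Bounding the invariants.} By the Griffiths--Green infinitesimal analysis of the Hodge locus associated to a curve on a surface in $\p3$, the codimension of $\NL(\gamma)$ in $U_d$ at $o$ is computed by an explicit cohomological expression built from $H^0(\p3, \I_{C_o}(d))$ and $H^0(\p3, \I_{C_o}(d-4))$. For fixed $d$, this quantity grows with $e$; combined with the special hypothesis
\[
\codim(L, U_d) < \binom{d-1}{3},
\]
it yields an effective bound $e \le e_0(d)$. Castelnuovo's genus inequality then controls $g$ in terms of $e$, so only finitely many pairs $(e,g)$ can occur. For \emph{each} such pair, the flag Hilbert scheme parametrising inclusions $C \subset X \subset \p3$ with $\deg C = e$, $p_a(C) = g$, $\deg X = d$ is of finite type over $\mathbb{C}$, hence has finitely many irreducible components; their images in $U_d$ exhaust the components of $\NL_d$ with invariants $(e,g)$. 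Summing over admissible pairs gives the desired finiteness, which would prove the topological conjecture.

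\emph{Main obstacle.} The Griffiths--Green codimension formula computes the codimension of the \emph{image} of a single flag-Hilbert component in $U_d$, i.e.\ only the codimension of the underlying reduced topological subset of a component of $\NL_d$. It is insensitive to the analytic scheme structure carried by that component as a Hodge locus. The serious difficulty is that two distinct flag-Hilbert components can produce topologically identical images which nevertheless correspond to genuinely different analytic scheme-theoretic components of $\NL_d$, one of which is \emph{non-reduced} with embedded or thickened structure. For such non-reduced components, the scheme-theoretic codimension can be strictly smaller than the value predicted by the Griffiths--Green formula, so the bound $e \le e_0(d)$ in Step 2 does not prevent infinitely many scheme-theoretically distinct special components from arising. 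Closing this gap is precisely what the analytic version of the conjecture requires; it is here that the strategy breaks down and, as the abstract indicates, is also the gap that the paper's main theorem will exploit to disprove the analytic statement for $d \ge 6$.
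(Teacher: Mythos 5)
You are attempting to prove a statement the paper never proves: it is stated as a conjecture, it was disproved by Voisin for all sufficiently large $d$, and the paper's main result (Theorem \ref{th2}) disproves the analytic part for every $d \ge 6$ by exhibiting infinitely many special components. So there is no proof in the paper to compare against, and any complete proof attempt must contain an error. You partially recognize this by flagging the analytic breakdown, but your claim to have proved the topological part has a genuine gap, namely Step 2. The assertion that the Griffiths--Green codimension expression ``grows with $e$,'' yielding an effective bound $e \le e_0(d)$ for special components, is not argued at all --- it \emph{is} the entire content of the conjecture, and it is false in general. The paper's construction refutes it directly at the level of classes: for $\gamma_a = [L_1] + a[L_2]$ with $a \ge 2$, any effective representative has degree $\gamma_a \cdot H = a+1 \to \infty$, yet $\codim \NL(\gamma_a) = 2d-6 < \binom{d-1}{3}$ for every $a$; the codimension does not climb to the maximal value as the degree of the representing curve grows. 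Step 1 is also ill-posed for exactly these components: at a very general point of the locus $W$ of surfaces containing two coplanar lines the N\'{e}ron--Severi lattice modulo the hyperplane class has rank $2$, so there is no distinguished ``generator'' $\gamma$ and hence no well-defined invariant $(e,g)$ attached to the topological component. Even for small $d$, where the topological conjecture remains open, Voisin's $d = 6,7$ results cover only \emph{reduced} special components, and no Hilbert-scheme finiteness argument of the kind you sketch is known to close the gap.

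Your diagnosis of the analytic obstacle points at the right phenomenon (non-reduced components supported on one topological locus), and in that respect it matches the mechanism of the paper, but it misstates the role of the Griffiths--Green formula. That formula does not compute ``the codimension of the image of a flag-Hilbert component'' and it is not insensitive to the analytic structure: it computes the codimension of the Zariski tangent space of the Hodge locus, which depends precisely on the class $\gamma$ and hence on the scheme structure. In the paper this is the whole point: for $a \ge 2$ the Griffiths--Green computation gives $\codim T_u\NL(\gamma_a) \le 2d-7$, strictly smaller than the topological codimension $2d-6$, which certifies that $\NL(\gamma_a)$ is generically non-reduced; the loci $\NL(\gamma_a)$ are then distinguished from one another using reducedness of the flag Hilbert scheme (Propositions \ref{lem4} and \ref{prop01}). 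So the inequality runs opposite to your phrasing: the scheme-theoretic (tangent-space) codimension is \emph{smaller} than the topological one, not smaller than ``the Griffiths--Green value,'' and it is exactly this mismatch, not a failure of the formula, that produces the infinitely many special components for all $d \ge 6$.
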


For $d \le 5$ the conjectures hold true (see \cite[Theorem $0.2$]{v3}).
The conjectures have been shown to be false by Voisin \cite{voicontr} for sufficiently large $d$.
For $d=6,7$, Voisin proved in \cite{voilieu} that $\NL_d$ has finitely many \emph{reduced}, 
special components.  
But there are several questions that are still open:
What is the largest $d'$ such that the Harris conjectures hold true for all $d \le d'$? For those $d$ for which the Harris conjectures fail, what is the 
largest $k$ such that there are finitely many special components of codimension at most $k$ and infinitely many special components of codimension
strictly greater than $k$? In this article we give a complete answer to 
these questions for the analytic Harris conjecture. We show:

\begin{thm}[see Theorem \ref{th2}]\label{th1}
 Let $d \ge 6$ and $X$ be a general smooth degree $d$ surface containing two coplanar lines, say $L_1, L_2$.
 Denote by $\gamma_a:=[L_1]+a[L_2] \in H^2(X,\mb{Z}) \cap H^{1,1}(X,\mb{C})$, for any $a \in \mb{Z}$.
 Then, for any $a, b \ge 0$ with $a \not= b$, we have $\NL(\gamma_a) \not= \NL(\gamma_b)$. Moreover, 
 \[\codim \NL(\gamma_0)=d-3,\, \codim \NL(\gamma_1)=2d-7 \mbox{ and } \codim \NL(\gamma_a)=2d-6 \mbox{ for } a \ge 2.\]
 In particular, we have following table which gives the number of irreducible components of $\NL_d$ with the given codimension:
 \[\begin{tabular}{|c|c|}
    \hline
    \mbox{codimension}&\mbox{number of irr. components}\\
    \hline
    $<d-3$&0\\
    d-3&1\\
    2d-7&1\\
    {$\leq 2d-7$} & 2\\
    2d-6&$\infty$\\
    \hline
   \end{tabular}\]
\end{thm}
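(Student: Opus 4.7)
The plan combines Griffiths' infinitesimal theory of the Hodge locus with the geometry of flag Hilbert schemes for line/conic/coplanar-line configurations, and distinguishes the infinitely many non-reduced Hodge loci for $a\ge 2$ via the detailed scheme structure at $o$. The starting point is the Jacobian ring $R:=\mb{C}[x_0,\ldots,x_3]/J_F$ of $X$. Under the Carlson--Griffiths residue isomorphism $H^{1,1}_{\mr{prim}}(X)\cong R_{2d-4}$, let $g_i\in R_{2d-4}$ represent the primitive part of $[L_i]$, set $g_a:=g_1+a g_2$, and apply Griffiths' theorem to identify
\[
  T_o\NL(\gamma_a)\;\cong\;\ker\bigl(\mu_a\colon R_d\to R_{3d-4},\ P\mapsto P\cdot g_a\bigr).
\]

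The reduced underlying varieties of the three cases come from flag Hilbert schemes: $\NL(\gamma_0)=\NL([L_1])$ is ``contains a line'', codimension $d-3$; $\NL(\gamma_1)=\NL([L_1+L_2])$ is ``contains a plane conic'', codimension $2d-7$; for $a\ge 2$, the identity $\NL(\gamma_a)\cap\NL(\gamma_b)=\NL([L_1])\cap\NL([L_2])$ (following from $\gamma_a-\gamma_b=(a-b)[L_2]$, which forces both $[L_1], [L_2]$ individually to be Hodge) combined with the flag Hilbert scheme of pairs (coplanar line pair $\subset$ surface) gives the reduced support ``contains two coplanar lines'' of codimension $2d-6$. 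An explicit Koszul-type computation of $\mu_a$ using the residue description of $g_1,g_2$ in terms of the plane $\Pi=\{\ell=0\}$ and the linear forms $m_i$ cutting the $L_i$ inside $\Pi$ then yields $\rk\mu_0=d-3$, $\rk\mu_1=2d-7$ and $\rk\mu_a=2d-7$ for every $a\in\mb{Z}\setminus\{0\}$: the coplanarity of $L_1,L_2$ forces the pencil rank to be constant at the conic value away from the two line points. Consequently for $a=0,1$ the tangent codimension matches the scheme codimension and $\NL(\gamma_a)$ is smooth and reduced at $o$, while for $a\ge 2$ the tangent codimension $2d-7$ is strictly less than the scheme codimension $2d-6$, so $\NL(\gamma_a)$ is \emph{non-reduced} at $o$, carrying a one-dimensional nilpotent excess transverse to its reduced support.

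For the distinctness of $\NL(\gamma_a)$ and $\NL(\gamma_b)$ with $a\ne b\ge 2$, I exploit $\ker\mu_a\cap\ker\mu_b=\ker\mu_1\cap\ker\mu_2$; provided one verifies the combined rank $\rk(\mu_1,\mu_2)=2d-6$ (the individual line-class maps being generically independent despite the conic rank drop of their sum), this intersection is of codimension one inside each tangent space $\ker\mu_a$, and the resulting one-dimensional enlargement depends non-trivially on $a$, so $T_o\NL(\gamma_a)\ne T_o\NL(\gamma_b)$ and the schemes are distinct. Should the tangent spaces instead coincide, the distinction would be extracted from a second-order Hessian comparison of the Hodge sections $\ov{\gamma_a},\ov{\gamma_b}$ of $\mc{H}/F^1\mc{H}$ via the Gauss--Manin connection, with obstruction to matching them proportional to $(a-b)$ and a non-vanishing second-order IVHS invariant of $\ov{[L_2]}$; either way the schemes differ. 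Each $\NL(\gamma_a)$ is then an irreducible component of $\NL_d$ in the paper's sense, and combining with the codimension counts produces the table: one component each in codimensions $d-3$ and $2d-7$, and infinitely many pairwise distinct non-reduced components of codimension $2d-6$. The main obstacle is the rank computation: verifying that the pencil rank is exactly $2d-7$ rather than the naive $2d-6$ requires exploiting the coplanar-conic relation in the Jacobian ring via a careful Koszul analysis, and it is this rank drop throughout the pencil that produces the non-reducedness and the infinite supply of pairwise distinct components for $a\ge 2$.
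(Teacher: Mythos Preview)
Your overall strategy matches the paper's: Griffiths' tangent description of the Hodge locus plus flag Hilbert schemes for the reduced supports, with the decisive contrast being that $\NL(\gamma_a)$ for $a\ge 2$ is non-reduced while $\NL([L_1])\cap\NL([L_2])$ is reduced. Your distinctness argument via tangent spaces is in fact the first-order shadow of the paper's scheme-level argument (if $\NL(\gamma_a)=\NL(\gamma_b)$ then both $\ov{[L_1]},\ov{[L_2]}$ vanish on it, so it is scheme-theoretically contained in the reduced $\NL([L_1])\cap\NL([L_2])$, contradicting non-reducedness), so once the two rank facts are in hand the second-order Hessian hedge is unnecessary.

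There are, however, three genuine gaps. First, the bound $\rk\mu_a\le 2d-7$ for $a\ge 2$ is only asserted via an unexecuted ``Koszul-type computation''; the paper proves it directly by the containment $H^0(K_X(-L_1-L_2))\subset H^{2,0}(-\gamma_a)$ from \cite[(4.a.4)]{GH} together with $h^0(K_X)-h^0(K_X(-L_1-L_2))=h^0(\mo_{L_1\cup L_2}(d-4))=2d-7$, using that $\mo_{L_1\cup L_2}$ is $1$-regular. Second, the ``combined rank $2d-6$'' that you explicitly leave open is precisely the content of the paper's Propositions: one shows the flag Hilbert scheme $\Hilb_{P_0,P_1,Q_d}$ is reduced, and then that $\ov{\NL([L_1])\cap\NL([L_2])}$ coincides \emph{as a scheme} with its image $W$; reducedness of $W$ at the general point $o$ is exactly $\codim\bigl(\ker(\cdot g_1)\cap\ker(\cdot g_2)\bigr)=2d-6$. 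Third, your identification of $\NL(\gamma_a)_{\red}$ with $W$ is not justified: the identity $\NL(\gamma_a)\cap\NL(\gamma_b)=\NL([L_1])\cap\NL([L_2])$ constrains the intersection, not $\NL(\gamma_a)$ itself, and a priori $\gamma_a$ could remain Hodge on a strictly larger locus. The paper closes this (and simultaneously obtains the table) by invoking Voisin's classification \cite[Theorem~0.2]{v3} of components of codimension $\le 2d-7$: these are only the line and conic loci, and if $\NL(\gamma_a)$ equalled either one it would force $\NL(\gamma_a)=\NL([L_1])\cap\NL([L_2])=W$, contradicting the codimension. Without this external input you cannot pin down $\codim\NL(\gamma_a)=2d-6$.
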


 The theorem immediately disproves the analytic Harris conjecture for $d \ge 6$.
 Recall, Voisin in \cite{voicontr}  uses the existence of infinitely many general components of $\NL_4$
 to produce infinitely many sufficiently high degree special components (after replacing the coordinates with general high degree polynomials).
 In particular, the topological space underlying the special components are distinct.
 Her counterexample relies on a numerical inequality that holds \emph{only} for sufficiently large $d$ (see \cite[p. $686$]{voicontr}). 
 In contrast, we simply study the scheme structure of 
 the Hodge locus corresponding to different linear combinations 
 of coplanar lines. In particular, we show that 
 the 
 space parametrizing smooth, degree $d$ surfaces containing $2$ coplanar lines can be 
 equipped with infinitely many (distinct) scheme structures \emph{naturally} arising as the Hodge loci 
 associated to different combinations of the two coplanar lines (see Theorem \ref{th2}).
 As a result, the 
 infinite number of special components in this article have the same underlying topological space (but different analytic scheme structures),
 thus giving us an entirely different set of counterexamples from those in 
 \cite{voicontr}. The topological Harris conjecture is still open for small values of $d$.
 In a recent preprint, Movasati in \cite{hoss} uses computer calculations for explicit values of $d$ to give  a
description of possible counterexamples to the topological Harris conjecture.

 \vspace{0.2 cm}
\emph{Acknowledgements}:
The author is currently supported by ERCEA Consolidator Grant $615655$-NMST, by the Basque Government through the BERC $2018-2021$ program and 
Gobierno Vasco Grant $\mbox{IT}1094-16$, by the Spanish Ministry of Science, Innovation and Universities: BCAM Severo Ochoa accreditation SEV-$2017-0718$. 

\section{Proof of main theorem}
In this section we prove Theorem \ref{th1}.
Fix an integer $d \ge 6$. 

% An irreducible component of the Noether-Lefschetz locus is a Hodge locus. This is a consequence of the
% Lefschetz $(1,1)$-theorem.
% 
% \subsection{Hodge locus}
% We recall the notion of the Hodge locus.
% For an irreducible component 
% $L \subset \NL_d$ and $X \in L$, general, we can find $\gamma \in H^2(X,\mb{Z}) \cap H^{1,1}(X,\mb{C})$
% such that $\overline{\NL(\gamma)}=L$ (the closure taken under Zariski topology on $U_d$). 
% The Gauss-Manin connection gives rise to a differential map, $\ov{\nabla}(\gamma):T_XU \to H^2(\mo_X)$, where $T_XU$ is the 
% tangent space to $U$ at the point corresponding to $X$.
% The tangent space at $X$ to $\NL(\gamma)$ is defined to be $\ker(\ov{\nabla}(\gamma))$ (see \cite{v5} for further details).

\subsection{Cohomology computations of the invertible sheaves associated to lines} 
To prove Theorem \ref{th1} we need the following basic computation on the first and second cohomology group of the invertible sheaf
associated to a line contained in a smooth, degree $d$ surface in $\p3$.

\begin{lem}\label{lem1}
Let $X$ be a smooth, degree $d$ surface
containing two coplanar lines, say $L, L'$. Then, 
\[H^0(\mo_X(L'))=H^0(\mo_X(L))=\mb{C}=H^0(\mo_X(L \cup L')) \mbox{ and } H^1(\mo_X(L))=0=H^1(\mo_X(L')).\]
\end{lem}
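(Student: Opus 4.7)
The plan is to extract all three cohomological statements from the short exact sequences associated to the effective Cartier divisors $L$, $L'$ and $L+L'$ on the smooth surface $X$, using adjunction on the lines and Lefschetz-type vanishing on $X$.

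First, by adjunction on the smooth rational curve $L \subset X$, the identity $-2 = L \cdot (L+K_X)$ with $K_X = \mo_X(d-4)$ yields $L^2 = 2-d$, so $\mo_L(L) \cong \mo_{\mb{P}^1}(2-d)$. The standard sequence
\[0 \to \mo_X \to \mo_X(L) \to \mo_L(L) \to 0,\]
together with $H^0(\mo_{\mb{P}^1}(2-d)) = 0$ for $d \ge 3$ and $H^0(\mo_X) = \mb{C}$, immediately gives $H^0(\mo_X(L)) = \mb{C}$; the same computation applied to $L'$ yields $H^0(\mo_X(L')) = \mb{C}$.

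Next, to obtain $H^1(\mo_X(L)) = 0$ I would apply Serre duality and instead establish the vanishing of $H^1(\mo_X(d-4)(-L))$ via the sequence
\[0 \to \mo_X(d-4)(-L) \to \mo_X(d-4) \to \mo_L(d-4) \to 0.\]
Since $H^1(\mo_X(d-4)) = H^1(K_X) = H^1(\mo_X)^{\vee} = 0$ (by Lefschetz and Serre duality on $X$), the group in question is precisely the cokernel of the restriction $H^0(X,\mo_X(d-4)) \to H^0(L,\mo_L(d-4))$. This restriction sits in the factorization
\[H^0(\mb{P}^3, \mo_{\mb{P}^3}(d-4)) \to H^0(X, \mo_X(d-4)) \to H^0(L, \mo_L(d-4)),\]
and the composite is obviously surjective (every section of $\mo_L(d-4)$ on $L \cong \mb{P}^1$ is the restriction of a homogeneous polynomial of degree $d-4$ on $\mb{P}^3$), so the right-hand map is surjective. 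By Serre duality on $X$ this gives $H^1(\mo_X(L)) = 0$, and symmetrically for $L'$.

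Finally, for $H^0(\mo_X(L+L'))$, I would twist the equation of $L'$ to obtain
\[0 \to \mo_X(L) \to \mo_X(L+L') \to \mo_{L'}(L+L') \to 0.\]
Two distinct coplanar lines in $\mb{P}^3$ meet in a single point with distinct tangent directions, so $L \cdot L' = 1$ on $X$, giving $\deg \mo_{L'}(L+L') = L \cdot L' + (L')^2 = 1 + (2-d) = 3-d \le -1$ for $d \ge 4$. Hence $H^0(\mo_{L'}(L+L')) = 0$, and combined with the previously established $H^0(\mo_X(L)) = \mb{C}$ we conclude $H^0(\mo_X(L+L')) = \mb{C}$. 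The only non-routine step in the whole argument is the surjectivity of $H^0(X,\mo_X(d-4)) \to H^0(L,\mo_L(d-4))$, which is what makes the Serre-dual formulation of $H^1(\mo_X(L))$ collapse; everything else is a standard exact-sequence manipulation.
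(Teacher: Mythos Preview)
Your proof is correct and follows essentially the same route as the paper: adjunction to compute $L^2=2-d$, the standard divisor sequences for the $H^0$ computations, and Serre duality to reduce $H^1(\mo_X(L))=0$ to $H^1(\mo_X(-L)(d-4))=0$. The only cosmetic difference is in this last step: the paper uses the ideal-sheaf sequence $0 \to \I_X(d-4) \to \I_L(d-4) \to \mo_X(-L)(d-4) \to 0$ and invokes the Castelnuovo--Mumford regularity of $\I_L$ to get $H^1(\I_L(d-4))=0$, while you use the divisor sequence on $X$ and establish the equivalent fact---surjectivity of $H^0(\mb{P}^3,\mo_{\mb{P}^3}(d-4)) \to H^0(L,\mo_L(d-4))$---by hand; these are the same input repackaged.
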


\begin{proof}
By the adjunction formula, $L^2=-2-(d-4)=2-d$, 
 which is less than zero for $d \ge 6$. Hence, $H^0(\N_{L|X})=0$.
 Using the short exact sequence 
 \begin{equation}\label{eq01}
 0 \to \mo_X \to \mo_X(L) \to \N_{L|X} \to 0
\end{equation}
we conclude that $\mb{C}=H^0(\mo_X)=H^0(\mo_X(L))$. Similarly, we can show that $\mb{C}=H^0(\mo_X(L'))$.
Next, we consider the short exact sequence:
\begin{equation}\label{eq04}
 0 \to \mo_X(L) \to \mo_X(L \cup L') \to \mo_{L'} \otimes \mo_X(L \cup L') \to 0
\end{equation}
obtained by tensoring with $\mo_X(L \cup L')$ the short exact sequence:
\[0 \to \mo_X(-L') \to \mo_X \to \mo_{L'} \to 0.\]
Note that, $L'.(L+L')=1+(2-d)<0$ for $d \ge 6$.
This implies, $H^0( \mo_{L'} \otimes \mo_X(L \cup L'))=0$. 
By \eqref{eq04}, we then have $H^0(\mo_X(L \cup L'))=H^0(\mo_X(L))=\mb{C}$.
This proves the first part of the lemma. We now show that 
$H^1(\mo_X(L))=0=H^1(\mo_X(L'))$.

Recall, the Castelnuovo-Mumford regularity of $\I_L$ is one (see \cite[Example $1.8.2$]{lazaI}).
This implies that $H^i(\I_L(j))=0$ for $i \ge 1$ and $j \ge 0$.
Consider now the short exact sequence:
\begin{equation}\label{eq02}
 0 \to \I_X(d-4) \to \I_L(d-4) \to \mo_X(-L)(d-4) \to 0.
\end{equation}
 As $\I_X \cong \mo_{\p3}(-d)$, we have $H^2(\I_X(d-4))=0$. Using the long exact sequence associated to \eqref{eq02}, 
 we conclude that $H^1(\mo_X(-L)(d-4))=0$.
 By Serre duality this implies \[H^1(\mo_X(L))=H^1(\mo_X(-L)(d-4))=0.\] 
Similarly, we can show that $H^1(\mo_X(L'))=0$. This proves the lemma.
\end{proof}

\subsection{Flag Hilbert schemes}\label{flagsec}

Let $X$ be a smooth, degree $d$ surface in $\p3$ containing two (distinct) coplanar lines, say $L_1$ and $L_2$.
Denote by $P_0$ (resp. $P_1, Q_d$) the Hilbert polynomial of $L_1$ (resp. $L_1 \cup L_2, X$).
Denote by $\mr{Hilb}_{P_0,P_1,Q_d}$ the flag Hilbert scheme parametrizing triples $(Z_1 \subset Z_2 \subset Z_3)$
with $Z_1$ (resp. $Z_2, Z_3$) having Hilbert polynomial $P_0$ (resp. $P_1, Q_d$). Denote by $\mr{Hilb}_{P_i}$ and $\mr{Hilb}_{Q_d}$ the 
Hilbert scheme associated to the Hilbert polynomial $P_i$ and $Q_d$, respectively for $i=0, 1$. See \cite[\S $4.3, 4.5$]{S1}
for a detailed discussion on (flag) Hilbert schemes.

\begin{prop}\label{lem4}
 The flag Hilbert scheme $\mr{Hilb}_{P_0, P_1, Q_d}$ is  reduced. In particular, the scheme-theoretic image under the natural projection map
 \[\pr: \mr{Hilb}_{P_0, P_1, Q_d} \to \mr{Hilb}_{Q_d}\]
 is reduced.
\end{prop}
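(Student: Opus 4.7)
\emph{Proof plan.} I would show that the flag Hilbert scheme is in fact smooth; reducedness then follows at once, and the reducedness of the scheme-theoretic image under $\pr$ is automatic from generalities, since the defining ideal $\ker(\mo_{\mr{Hilb}_{Q_d}} \to \pr_*\mo_{\mr{Hilb}_{P_0,P_1,Q_d}})$ of the image yields a quotient that injects into $\pr_*\mo_{\mr{Hilb}_{P_0,P_1,Q_d}}$, which has no nilpotent sections if the source does not. To establish smoothness I would exhibit $\mr{Hilb}_{P_0,P_1,Q_d}$ as a flat fibration with smooth base and smooth fibres, via the forgetful projection
$$\pi : \mr{Hilb}_{P_0,P_1,Q_d} \longrightarrow \mr{Hilb}_{P_0,P_1}, \qquad (L,\, L\cup L',\, X) \mapsto (L,\, L\cup L').$$

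Smoothness of (the relevant open subscheme of) the base is geometric: a pair of distinct coplanar lines $L, L'$ in $\p3$ is the same datum as a plane $H \in (\p3)^\vee$ together with an ordered pair of distinct lines in $H \cong \mb{P}^2$, so this locus is an open subset of a smooth $\mb{P}^2 \times \mb{P}^2$-bundle over $(\p3)^\vee$, of dimension $7$. The fibre of $\pi$ over $(L, L\cup L')$ is canonically identified with $\mb{P}\bigl(H^0(\p3, \mathcal{I}_{L\cup L'}(d))\bigr)$, which is a projective space, hence smooth.

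For flatness I would show that the fibre dimension, i.e. $h^0(\mathcal{I}_{L\cup L'}(d))$, is constant on the base. Since $h^0(\mo_{L\cup L'}(d)) = 2d+1$ is determined by the Hilbert polynomial $P_1$, this constancy is equivalent to the vanishing $H^1(\mathcal{I}_{L\cup L'}(d)) = 0$. That vanishing I would obtain by twisting the residual sequence
$$0 \to \mo_{\p3}(-1) \to \mathcal{I}_{L\cup L'} \to \mo_H(-2) \to 0$$
by $d$ and applying the standard vanishings $H^1(\mo_{\p3}(d-1)) = 0$ and $H^1(\mo_H(d-2)) = 0$---essentially the same ingredients already used in Lemma \ref{lem1}. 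Once flatness is secured, smoothness of base and fibres forces smoothness of the total space.

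The main technical obstacle is not any cohomological estimate but rather bookkeeping of strata: subschemes with Hilbert polynomial $P_1 = 2t+1$ admit non-reduced degenerations supported on a single line, which could a priori contribute further components to $\mr{Hilb}_{P_0, P_1}$ for which the clean coplanar description above would have to be either reworked or shown to be irrelevant. Since only the component containing the generic triple $(L_1, L_1 \cup L_2, X)$ enters the subsequent Hodge-locus analysis, and its reduced scheme-theoretic image under $\pr$ is all that one really needs, the fibration argument sketched above should give the stated conclusion for the relevant locus.
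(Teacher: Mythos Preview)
Your approach is essentially the paper's: both factor through $\mr{Hilb}_{P_0,P_1}$ and check that base and fibres are smooth/reduced, with the fibre over $(L,C)$ being $\mb{P}(H^0(\I_C(d)))$. The paper projects one step further to the smooth $\mr{Hilb}_{P_0}$ and verifies reducedness of $\pr_1^{-1}(t)\subset\mr{Hilb}_{P_0,P_1}$ by a direct tangent-space count, using $\N_{L_t+L'|\p3}\cong\mo_{L_t+L'}(1)\oplus\mo_{L_t+L'}(2)$ (complete-intersection normal bundle) and the identification of $T_{(L_t\subset L_t+L')}\pr_1^{-1}(t)$ with the kernel of the restriction $H^0(\N_{L_t+L'|\p3})\to H^0(\N_{L_t+L'|\p3}\otimes\mo_{L_t})$; whereas you argue geometrically that $\mr{Hilb}_{P_0,P_1}$ is an open in a $\mb{P}^2\times\mb{P}^2$-bundle over $(\p3)^\vee$ and add a flatness step via $H^1(\I_C(d))=0$. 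Your route yields smoothness rather than mere reducedness, which is a mild bonus.

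Your one hesitation, about the double-line stratum, is not a genuine obstacle, and in fact both arguments already cover it. Every subscheme of $\p3$ with Hilbert polynomial $2t+1$ is a planar conic (the only degree-$2$ double structures on a line with arithmetic genus $0$ are those lying in a plane), so a point of $\mr{Hilb}_{P_0,P_1}$ is exactly the datum $(H,L,L')$ of a plane and an ordered pair of lines in it, with the conic $L+L'$ taken in $H$; when $L'=L$ this gives the planar double line, and the plane is still recovered from the non-reduced scheme. Thus your bundle description is valid over all of $\mr{Hilb}_{P_0,P_1}$, not just an open, and your residual sequence $0\to\mo_{\p3}(-1)\to\I_{C}\to\mo_H(-2)\to 0$ holds uniformly. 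Equivalently, the paper's tangent computation uses only that $L_t+L'$ is a $(1,2)$ complete intersection, which the double line also is. So you may drop the hedge about ``the relevant open subscheme'' and conclude smoothness of the entire flag Hilbert scheme.
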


\begin{proof}
 Consider the natural projection map:
 \[\pr_0: \mr{Hilb}_{P_0, P_1, Q_d} \to \mr{Hilb}_{P_0}.\]
 Clearly, this map is surjective as for every line $L$ there exists infinitely many lines $L'$ lying on the same plane as $L$
 such that $L \cup L'$ is contained in a smooth, degree $d$ surface in $\p3$.
 Note that for any point $t \in \mr{Hilb}_{P_0}$, we have 
 \[\dim T_t\mr{Hilb}_{P_0} = h^0(\N_{L_t|\p3})=h^0(\mo_{L_t}(1))+h^0(\mo_{L_t}(1))=4=\dim \Hilb_{P_0},\]
 where $L_t$ is the line corresponding to the point $t$. Hence, $\mr{Hilb}_{P_0}$ is smooth. 
 A standard exercise in commutative algebra tells us that given a morphism of schemes, the domain is reduced if  
 the scheme-theoretic image and every fiber is  reduced. Therefore, it is sufficient 
 to check that the every fiber to the morphism $\pr_0$ is  reduced.
 
 Note that, the morphism $\pr_0$ factors through $\mr{Hilb}_{P_0,P_1}$.
 Denote by \[\pr_1:\mr{Hilb}_{P_0,P_1} \to \mr{Hilb}_{P_0} \mbox{ and } \pr_2:\mr{Hilb}_{P_0,P_1,Q_d} \to \mr{Hilb}_{P_0,P_1}\]
 the natural projections. Since every conic can be embedded in a degree $d$ surface in $\p3$, the scheme-theoretic image of $\pr_0^{-1}(t)$
 under the morphism $\pr_2$ coincides with the fiber $\pr_1^{-1}(t)$. The dimension of $\pr_1^{-1}(t)$ equals 
 $\dim \mb{P}(H^0(\mo_{\mb{P}^2}(1)))+1$, where the first term is the dimension of the space of lines contained in 
 the same plane as $L_t$ (after fixing the plane) and the second term is the dimension of the space of planes in $\p3$ containing 
 $L_t$. For any line $L'$ contained in the same plane as $L_t$, we have 
 \[h^0(\mo_{L_t+L'}(1))=h^0(\mo_{L_t}(1))+1 \mbox{ and } h^0(\mo_{L_t+L'}(2))=2.2+1=5\]
 where the last equality follows from the fact that the Castelnuovo-Mumford regularity of $\mo_{L_t+L'}$ is one, 
 which implies $h^0(\mo_{L_t+L'}(2))$ equals $P_{L_t+L'}(2)$ for the Hilbert polynomial $P_{L_t+L'}(n)$ of $L_t+L'$.
 Similarly, $h^0(\mo_{L_t}(2))=2+1=3$.
 Since $\N_{L_t+L'|\p3} \cong \mo_{L_t+L'}(1) \oplus \mo_{L_t+L'}(2)$,  \cite[Ex. II.$8.4$]{R1} implies that the restriction morphism 
 \[\rho:H^0(\N_{L_t+L'|\p3}) \to H^0(\N_{L_t+L'|\p3} \otimes_{\mo_{\p3}} \mo_{L_t})\]
 is surjective. Hence, \[\dim \ker(\rho)=h^0(\mo_{L_t+L'}(1))+h^0(\mo_{L_t+L'}(2))-h^0(\mo_{L_t}(1))-h^0(\mo_{L_t}(2))=1+5-3=3.\]
Using \cite[Remarks $4.5.4$]{S1}, we have 
$\dim T_{(L_t \subset L_t+L')}\pr_1^{-1}(t) =\ker \rho$, which by our computation equals $\dim \pr_1^{-1}(t)$. Hence, $\pr_1^{-1}(t)$ is reduced.

  The fiber over the point corresponding to the pair  $(L_t \subset L_t \cup L')$ for the composed morphism 
  \[\pr_0^{-1}(t) \hookrightarrow \mr{Hilb}_{P_0,P_1,Q_d} \xrightarrow{\pr_2} \mr{Hilb}_{P_0,P_1}\]
    is isomorphic to $\mb{P}(H^0(\I_{L_t \cup L'}(d)))$, which is reduced. 
    Since  $\pr_2(\pr_0^{-1}(t))=\pr_1^{-1}(t)$ is reduced, this implies that $\pr_0^{-1}(t)$ is reduced.
 Hence, $\mr{Hilb}_{P_0, P_1, Q_d}$ is reduced. The second part of the lemma is direct (scheme-theoretic image of a reduced scheme is reduced).
 This proves the proposition.
\end{proof}

\subsection{Proof of Theorem \ref{th1}}
Let $X$ be a general, smooth, degree $d$ surface in $\p3$ containing $2$ distinct coplanar lines, say $L_1, L_2$.
We use the notations as in \S \ref{flagsec}.
Denote by $o \in U_d$ the point corresponding to $X$.
Let $U \subset U_d$ be a simply connected neighbourhood of $o$ in $U_d$ as before.

\begin{prop}\label{prop01}
The (Zariski) closure of $\NL([L_1]) \cap \NL([L_2])$ in $U_d$ is isomorphic to the scheme-theoretic image of 
the morphism $\pr$ as in Proposition \ref{lem4}, intersected with $U_d$.
\end{prop}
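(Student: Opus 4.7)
The plan is to identify both sides through their underlying deformation functors, with the vanishings from Lemma \ref{lem1} supplying the crucial input. I will first verify set-theoretic equality and then promote it to a scheme-theoretic isomorphism.

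For set-theoretic equality, a point $u \in U$ lies in $\NL([L_1])\cap \NL([L_2])$ exactly when the parallel transports $[L_1]_u, [L_2]_u$ remain Hodge on $\mc{X}_u$. By Lefschetz's $(1,1)$-theorem these classes are represented by line bundles, and semicontinuity applied to Lemma \ref{lem1} keeps $H^1$ of the corresponding line bundle vanishing in a neighbourhood of $o$. Each such line bundle thus admits a nontrivial global section cutting out a curve $L_i'$ with the Hilbert polynomial of a line, i.e., an actual line in $\p3$ lying on $\mc{X}_u$. Since the intersection pairing is topological, $L_1'\cdot L_2' = L_1\cdot L_2 = 1$, which forces the two lines to meet in a point and hence to be coplanar in $\p3$. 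Therefore $L_1'\cup L_2'$ has Hilbert polynomial $P_1$ and $u$ lies in the image of $\pr$; the reverse inclusion is immediate.

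To upgrade to a scheme-theoretic isomorphism, I use the standard identification (cf.\ \cite[\S $5.3$]{v5}) of $\NL([L_i])$ near $o$ with the scheme-theoretic image in $\Hilb_{Q_d}$ of the projection $\Hilb_{P_0,Q_d}\to \Hilb_{Q_d}$, valid because $H^1(\mo_X(L_i))=0$. Hence the scheme-theoretic intersection $\NL([L_1])\cap \NL([L_2])$ equals the scheme-theoretic intersection in $\Hilb_{Q_d}$ of these two images; it remains to identify this intersection with the image of $\pr$. I propose to do this by comparing tangent spaces at $(L_1\subset L_1\cup L_2\subset X)$ via the standard normal-sheaf exact sequence for the flag Hilbert scheme, together with the surjectivity of the restriction map $\rho$ and the vanishing of $H^0(\N_{L_i|X})$ already established inside the proof of Proposition \ref{lem4}. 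Higher-order agreement follows by iterating on square-zero extensions, the obstructions again living in $H^1(\mo_X(L_i))=0$.

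The main obstacle is precisely this scheme-theoretic step, since Hodge loci may carry nontrivial nilpotent structure (as Theorem \ref{th1} already indicates for the related classes $\gamma_a$). The strategy exploits Proposition \ref{lem4}: the scheme-theoretic image of $\pr$ is automatically reduced, so the proposition is equivalent to showing that $\NL([L_1])\cap \NL([L_2])$ is reduced of the expected codimension $2d-6$. The vanishing of $H^1(\mo_X(L_i))$ makes the lifting of each individual line unobstructed, while coplanarity, combined with the Castelnuovo-Mumford regularity computation appearing in the proof of Proposition \ref{lem4}, ensures that $L_1$ and $L_2$ lift compatibly with no extraneous infinitesimal relations.
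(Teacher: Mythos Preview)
Your set-theoretic argument is essentially the paper's: invoke Lefschetz $(1,1)$, use the vanishings of Lemma \ref{lem1} together with semicontinuity to produce effective divisors in the classes $[L_i]$, and check via the intersection form that these are coplanar lines.

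The scheme-theoretic step, however, is where your proposal becomes incomplete. You offer two routes---a tangent-space comparison followed by iteration over square-zero extensions, and a direct verification that $\NL([L_1])\cap\NL([L_2])$ is reduced---but neither is actually carried out. Knowing that each $\NL([L_i])$ is individually the (reduced) image of $\Hilb_{P_0,Q_d}\to\Hilb_{Q_d}$ does not control the scheme structure on their \emph{intersection}: an intersection of reduced subschemes need not be reduced, and the intersection of two scheme-theoretic images is not in general the image of any natural map. Your obstruction sketch (``the obstructions again living in $H^1(\mo_X(L_i))=0$'') is too vague; you would have to identify the obstruction space for the \emph{pair} of lines and compute the tangent dimension of the intersection, and you do neither.

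The paper sidesteps all of this by promoting your pointwise construction to families. Using cohomology and base change (the vanishing $h^1(\mc{L}_{i,v})=0$ from Lemma \ref{lem1} makes $\pi'_*\mc{L}_i$ a line bundle near $o$), one obtains sections $s_i$ whose zero schemes $\mc{L}_i^{\mr{eff}}$ are \emph{flat} families of lines over an open $V\subset\ov{\NL([L_1])\cap\NL([L_2])}$. The triple $(\mc{L}_1^{\mr{eff}}\subset\mc{L}_1^{\mr{eff}}+\mc{L}_2^{\mr{eff}}\subset\mc{X}_V)$ is then a flat family of flags, so the universal property of $\Hilb_{P_0,P_1,Q_d}$ yields a morphism $V\to\Hilb_{P_0,P_1,Q_d}$ whose composite with $\pr$ is the inclusion $V\hookrightarrow\Hilb_{Q_d}$. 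That single sentence gives the scheme-theoretic inclusion $V\subset\Ima(\pr)$ directly, with no tangent-space or reducedness computation; properness of $\pr$ extends it to the closure. The point you are missing is that carrying out the effective-divisor construction \emph{relatively} over $V$, rather than fiber by fiber, already packages all the infinitesimal information you were trying to recover by hand.
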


 \begin{proof}
 Denote by $W:=\Ima(\pr) \cap U_d$, where $\pr$ is as in Proposition \ref{lem4}.
 Note that, $W$ parametrizes smooth, degree $d$ surfaces in $\p3$ containing two coplanar lines. Hence,
 $\ov{\NL([L_1]) \cap \NL([L_2])}$ contains $W$. We now prove the reverse inclusion i.e., $\ov{\NL([L_1]) \cap \NL([L_2])} \subset W$.
 Denote by 
 \[\pi': \mc{X}' \to \ov{\NL([L_1]) \cap \NL([L_2])}\] the restriction of 
  $\pi$ to $\ov{\NL([L_1]) \cap \NL([L_2])}$. 
 By Lefschetz $(1,1)$-theorem, there exist invertible sheaves $\mc{L}_1$ and $\mc{L}_2$ over $\mc{X}'$ such that $\mc{L}_1|_X \cong \mo_X(L_1)$
 and $\mc{L}_2|_X \cong \mo_X(L_2)$. 
 Using Lemma \ref{lem1} and the upper semi-continuity of cohomology (see \cite[Theorem III.$12.8$]{R1}), 
 there exists an open neighbourhood $V \subset \ov{\NL([L_1]) \cap \NL([L_2])}$ of $o$ such that 
for all $v \in V$, we have 
 \[h^0(\mc{L}_{1,v})= 1 = h^0(\mc{L}_{2,v}) \mbox{ and } h^1(\mc{L}_{1,v})= 0 = h^1(\mc{L}_{2,v}), \mbox{ where } \mc{L}_{1,v}:=\mc{L}_1|_{\mc{X}_v}
 \mbox{ and } \mc{L}_{2,v}:=\mc{L}_2|_{\mc{X}_v}.\]
By \cite[Theorem III.$12.11$]{R1}, for every $v \in V$, the natural morphisms
\[\pi'_*\mc{L}_1 \otimes k(v) \to H^0(\mc{L}_{1,v}) \mbox{ and } \pi'_*\mc{L}_2 \otimes k(v) \to H^0(\mc{L}_{2,v})\]
are isomorphisms. Hence, after contracting $V$ if necessary, there exist sections $s_1 \in \Gamma(V, \pi'_*\mc{L}_1)$
and $s_2 \in \Gamma(V,\pi'_*\mc{L}_2)$ such that its image $s_{1,v}$ and $s_{2,v}$ in $H^0(\mc{L}_{1,v})$ and $H^0(\mc{L}_{2,v})$, respectively
are non-zero for all $v \in V$. The sections $s_1$ and $s_2$ give rise to the short exact sequence:
\[0 \to \mc{L}_i^\vee|_V \xrightarrow{.s_i} \mo_{\mc{X}_V} \to \mo_{Z(s_i)} \to 0,\]
where $Z(s_i)$ is the \emph{zero locus of the section } $s_i$ in $\mc{X}_V:=\pi^{-1}(V)$, for $i=1,2$.
Since $s_{i,v}$ is non-zero, the natural morphism 
\[\mc{L}_{i,v}^\vee \xrightarrow{.s_{i,v}} \mo_{\mc{X}_v}\]
is injective for $i=1, 2$. By the local criterion of flatness (see \cite[p. $150$, $(20.\mbox{E})$]{mat2}),
we conclude that $Z(s_i)$ is flat over $V$ for $i=1, 2$. Denote by 
$\mc{L}^{\mr{eff}}_1:=Z(s_i)$ and $\mc{L}^{\mr{eff}}_2:=Z(s_2)$.
It is easy to check that the effective divisor 
$\mc{L}^{\mr{eff}}_1+\mc{L}^{\mr{eff}}_2$ of $\mc{X}_V$ is also flat over $V$.
By the universal property, of the flag Hilbert schemes (see \cite[Theorem $4.5.1$]{S1}), the triple 
\[(\mc{L}^{\mr{eff}}_1 \subset \mc{L}^{\mr{eff}}_1+\mc{L}^{\mr{eff}}_2 \subset \mc{X}_V)\]
induces a morphism from $V$ to $\mr{Hilb}_{P_0, P_1, Q_d}$ such that the composition 
\[V \to \mr{Hilb}_{P_0, P_1, Q_d} \xrightarrow{\pr} \mr{Hilb}_{Q_d}\]
is the natural inclusion. This implies, a dense open subscheme of $\ov{\NL([L_1]) \cap \NL([L_2])}$ lies in the scheme-theoretic image of $\pr$.
Since the morphism $\pr$ is proper, we conclude that $\ov{\NL([L_1]) \cap \NL([L_2])}$ lies in the scheme-theoretic image of $\pr$.
So, we have the reverse inclusion. Hence. $\ov{\NL([L_1]) \cap \NL([L_2])}=\Ima(\pr)$. This proves the proposition.
%  Since $L_1$ and $L_2$ are distinct and the associated linear system is a point (Lemma \ref{lem1}), 
%  we conclude that $\mo_X(L_1) \not\cong \mo_X(L_2)$. Now, $\ov{\NL([L_1]) \cap \NL([L_2)])}$ 
%  consists of those points $t \in U_d$, such that $\mo_X(L_1)$ and $\mo_X(L_2)$
%  deform as invertible sheaves to $\mc{X}_t$. This means, by the observation above, the generic point of $\ov{\NL([L_1]+[L_2])}$ is not contained in 
%  $\ov{\NL([L_1]) \cap \NL([L_2)])}$. Hence, $\ov{\NL([L_1]) \cap \NL([L_2)])}$ is properly contained in $\ov{\NL([L_1]+[L_2])}$.
%  But, \[\codim \ov{\NL([L_1]+[L_2])}=2d-7 \mbox{ and } \codim \ov{\NL([L_1]) \cap \NL([L_2)])} \le 2d-6\]
%  (note that $\ov{\NL([L_1]) \cap \NL([L_2)])}$ contains the 
%  space of smooth, degree $d$ surfaces containing $2$ coplanar lines which is of codimension $2d-6$). Hence, 
%  \[\codim \ov{\NL([L_1]) \cap \NL([L_2)])}=2d-6\] 
%  and parametrizes  the  space of smooth, degree $d$ surfaces containing $2$ coplanar lines. 
%  Using the universal property of Hilbert schemes, 
%  This proves the proposition.
 \end{proof}
% 
% 
% We are now ready to prove the main result of the article:

\begin{thm}\label{th2}
For any $a \in \mb{Z}$, denote by $\gamma_a:=[L_1]+a[L_2] \in H^2(X,\mb{Z}) \cap H^{1,1}(X,\mb{C})$.
 Then, for any $a, b \ge 0$ with $a \not= b$, we have $\NL(\gamma_a) \not= \NL(\gamma_b)$. Moreover, 
 \[\codim \NL(\gamma_0)=d-3,\, \codim \NL(\gamma_1)=2d-7 \mbox{ and } \codim \NL(\gamma_a)=2d-6 \mbox{ for } a \ge 2.\]
\end{thm}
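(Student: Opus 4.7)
The plan is to establish the three codimension statements separately and then distinguish the schemes at $X$ using the infinitesimal Hodge-theoretic tangent space description.

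For $a = 0$, $\NL(\gamma_0) = \NL([L_1])$ is the classical locus of surfaces containing a line, of codimension $d - 3$, via a flag Hilbert scheme computation in the spirit of Proposition \ref{lem4}. For $a = 1$, $\gamma_1 = [L_1 \cup L_2]$ is the class of a (degenerate) plane conic, and a Hilbert scheme argument for pairs (plane conic, degree-$d$ surface) yields codimension $2d - 7$; the key point is that $L_1 \cup L_2$ admits a $5$-dimensional family of plane-conic deformations, a strictly larger family than the pairs of coplanar lines on a varying surface. For $a \ge 2$, Proposition \ref{prop01} gives the reduced inclusion $\ov{\NL([L_1]) \cap \NL([L_2])} \subseteq \NL(\gamma_a)$ of codimension $2d - 6$, hence $\codim \NL(\gamma_a) \le 2d - 6$. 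To upgrade this to equality I would show that every irreducible component of $\NL(\gamma_a)$ through $X$ lies in $\ov{\NL([L_1]) \cap \NL([L_2])}$: on a general surface $X'$ of such a component the class $\gamma_a$ is algebraic by Lefschetz, but the adjunction formula shows that any irreducible curve of class $[L_1] + a[L_2]$ has negative arithmetic genus when $a \ge 2$ and $d \ge 6$, so the effective representative of $\gamma_a$ on $X'$ must be reducible with two line-components specialising to $L_1, L_2$, forcing $X' \in \NL([L_1]) \cap \NL([L_2])$.

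For the scheme-theoretic distinction in the critical case $a, b \ge 2$ (the other cases follow at once from distinct codimensions), I would compare Zariski tangent spaces at $X$. Writing $\phi_i : T_X U_d \to H^{0,2}(X)$ for the infinitesimal period map associated to $[L_i]$, one has $T_X \NL(\gamma_a) = \ker(\phi_1 + a\phi_2)$. A one-line linear-algebra argument shows that if $\ker(\phi_1 + a\phi_2) = \ker(\phi_1 + b\phi_2)$ for $a \ne b$, then subtracting forces both kernels to equal $\ker \phi_1 \cap \ker \phi_2 = T_X \ov{\NL([L_1]) \cap \NL([L_2])}$, where the identification uses that Proposition \ref{lem4} combined with Proposition \ref{prop01} yields the scheme-theoretic transversality $\mathrm{image}(\phi_1, \phi_2) = \mathrm{image}(\phi_1) \oplus \mathrm{image}(\phi_2)$ in $H^{0,2}(X)^{\oplus 2}$. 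To rule this out, I would use the $a = 1$ case: the smoothing of $L_1 \cup L_2$ into a plane conic furnishes $v \in T_X U_d$ with $\phi_1(v) = -\phi_2(v) \ne 0$, so $\mathrm{image}(\phi_1) \cap \mathrm{image}(\phi_2) \ne 0$. Picking a nonzero $y \in \mathrm{image}(\phi_1) \cap \mathrm{image}(\phi_2)$ and invoking the surjectivity above, one lifts the pair $(-ay, y)$ to a vector $v_a$ with $\phi_1(v_a) = -ay$, $\phi_2(v_a) = y$; this $v_a$ lies in $T_X \NL(\gamma_a)$ but satisfies $\phi_1(v_a) + b\phi_2(v_a) = (b-a)y \ne 0$ for every $b \ne a$, so $T_X \NL(\gamma_a) \ne T_X \NL(\gamma_b)$ and the Hodge loci are distinct as schemes. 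The same $v_a$ also certifies that $\NL(\gamma_a)$ is non-reduced at $X$, since its Zariski tangent space strictly exceeds that of the smooth reduced intersection.

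The principal obstacle is the support claim for $a \ge 2$, i.e., proving no irreducible component of $\NL(\gamma_a)$ through $X$ has codimension strictly less than $2d - 6$. The adjunction-theoretic negative-genus calculation handles irreducible curve representatives of $\gamma_a$, but excluding exotic Picard-lattice degenerations on hypothetical lower-codimension components requires a careful case analysis of admissible effective divisor classes on $X'$.
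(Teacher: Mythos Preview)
Your tangent–space argument for distinguishing $\NL(\gamma_a)$ from $\NL(\gamma_b)$ when $a,b\ge 2$ is correct and is a pleasant, more explicit variant of what the paper does. The paper instead shows $\codim T_o\NL(\gamma_a)\le 2d-7$ via the Griffiths--Harris bound $h^0(K_X)-h^0(K_X(-L_1-L_2))=2d-7$ (a Castelnuovo--Mumford regularity computation), hence each $\NL(\gamma_a)$ is generically non-reduced; if two of them coincided they would be contained in the reduced scheme $\ov{\NL([L_1])\cap\NL([L_2])}$, a contradiction. Your linear–algebra repackaging (lift $(-ay,y)$ through the surjection $(\phi_1,\phi_2)\colon T_XU_d\twoheadrightarrow\mathrm{im}\,\phi_1\times\mathrm{im}\,\phi_2$) yields the same non-reducedness and an explicit distinguishing vector in one stroke; both arguments rest on the transversality coming from Propositions \ref{lem4} and \ref{prop01}, so the content is the same.

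Where your proposal has a genuine gap is the support claim $\codim\NL(\gamma_a)=2d-6$ for $a\ge 2$, and the gap is more serious than you indicate. The adjunction step is not salvageable by a ``careful case analysis of effective divisor classes'': a Hodge class need not be effective at all, and even when $\gamma_a$ or some twist is effective there is no mechanism forcing the components to be lines, let alone \emph{coplanar} lines specialising to $L_1,L_2$. The paper does not attempt any such direct analysis. Instead it invokes Voisin's classification \cite[Theorem 0.2]{v3}: the only irreducible components of $\NL_d$ of codimension at most $2d-7$ are the line locus and the conic locus. So if $\codim\NL(\gamma_a)<2d-6$ then $\NL(\gamma_a)=\NL(\gamma_0)$ or $\NL(\gamma_a)=\NL(\gamma_1)$ as Hodge loci; in either case $[L_1]$ (or $[L_1]+[L_2]$) remains Hodge along $\NL(\gamma_a)$, hence so does $[L_2]$, forcing $\NL(\gamma_a)\subset\NL([L_1])\cap\NL([L_2])$, which by Proposition \ref{prop01} has codimension $2d-6$ --- a contradiction. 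This external input from \cite{v3} is the missing ingredient in your plan, and without it (or an equivalent classification of low-codimension components) the argument does not close.
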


\begin{proof}
Denote by $\ov{\NL(\gamma_a)}$ the closure of $\NL(\gamma_a)$ in $U_d$ under the Zariski topology.
 By \cite[Theorem $0.2$]{v3} $\ov{\NL(\gamma_0)}$ (resp. $\ov{\NL(\gamma_1)}$) parametrizes smooth, degree $d$ surfaces
 containing a line (resp. a conic) and is of codimension $d-3$ (resp. $2d-7$) in $U_d$. Furthermore, both $\NL(\gamma_0)$ and 
 $\NL(\gamma_1)$ are reduced. 
 We first note that for $a \ge 2$, we have $\NL(\gamma_a) \not= \NL(\gamma_0)$,  $\NL(\gamma_a) \not= \NL(\gamma_1)$ 
 and $\codim \NL(\gamma_a)=2d-6$. Indeed, $\ov{\NL(\gamma_a)}$ contains the space $W$ parametrizing smooth, degree $d$ surfaces 
 containing $2$ coplanar lines. It is easy to compute that $\codim W=2d-6$. Hence,
 $\codim \NL(\gamma_a) \le 2d-6$. By \cite[Theorem $0.2$]{v3}, either $\NL(\gamma_a)=\NL(\gamma_0)$ or $\NL(\gamma_a)=\NL(\gamma_1)$ or 
 $\codim \NL(\gamma_a)=2d-6$. If $\NL(\gamma_a)=\NL(\gamma_0)$ or $\NL(\gamma_a)=\NL(\gamma_1)$, then $\ov{\NL(\gamma_a)}$ parametrizes
 smooth, degree $d$ surfaces such that both $[L_1]$ and $[L_2]$ remains a Hodge class, in particular 
 \[\NL(\gamma_a)=\NL([L_1]) \cap \NL([L_2]).\] 
 But, Proposition \ref{prop01} then implies that $\ov{\NL(\gamma_a)}=\Ima(\pr)=W$, which is of codimension $2d-6$. This gives us a contradiction.
 Hence, for $a \ge 2$, $\NL(\gamma_a) \not= \NL(\gamma_0)$,  $\NL(\gamma_a) \not= \NL(\gamma_1)$ 
 and $\codim \ov{\NL(\gamma_a)}=2d-6$.
  
 We now show that for $a \ge 2$ and $u \in \NL(\gamma_a)$ general, we have $\codim T_u\NL(\gamma_a) \le 2d-7$.
 Indeed, using \cite[$(4.a.4)$]{GH}, we have for all $a \ge 2$,
 \[H^0(K_X(-L_1-L_2)) \subset H^{2,0}(-\gamma_a):=\{\psi \in H^{2,0}(X,\mb{C})| (t \cup \psi) \cup \gamma_a=0 \mbox{ for all } t \in H^1(\T_X)\}.\]
 Recall from \cite[p. $211$]{GH} that 
 \[\codim T_o\NL(\gamma_a)=\dim H^{2,0}(X,\mb{C})-\dim H^{2,0}(-\gamma_a)\]
 which by our calculations is bounded above by $h^0(K_X)-h^0(K_X(-L_1-L_2))$.
 Consider now the short exact sequence:
 \begin{equation}\label{eq03}
  0 \to K_X(-L_1-L_2) \to K_X \to K_X \otimes \mo_{L_1 \cup L_2} \to 0.
 \end{equation}
 Using \cite[Example $1.8.2$]{lazaI}, the Castelnuovo-Mumford regularity of $\mo_{L_1 \cup L_2}$ is one. Hence, \[H^1(\I_{L_1 \cup L_2}(d-4))=0 \mbox{ for } d \ge 6.\]
 Using the exact sequence 
 \[0 \to \I_X(d-4) \to \I_{L_1 \cup L_2}(d-4) \to \mo_X(-L_1-L_2)(d-4) \to 0,\]
 we conclude that $H^1(\mo_X(-L_1-L_2)(d-4))=0$ for $d \ge 6$. In particular,
 \[H^1(K_X(-L_1-L_2))=H^1(\mo_X(-L_1-L_2)(d-4))=0.\]
 By the short exact sequence \eqref{eq03}, we conclude that 
 \[h^0(K_X)-h^0(K_X(-L_1-L_2))=h^0(K_X \otimes \mo_{L_1 \cup L_2})=h^0(\mo_{L_1 \cup L_2}(d-4))=P_{L_1 \cup L_2}(d-4),\]
 where $P_{L_1 \cup L_2}(t)$ is the Hilbert polynomial of $L_1 \cup L_2$
 and the last equality follows from the fact that the Castelnuovo-Mumford regularity of $\mo_{L_1 \cup L_2}$ is one. 
 Now, the Hilbert polynomial of $L_1 \cup L_2$ is 
 $2(d-4)+1=2d-7$. Therefore, $\codim T_o\NL(\gamma_a) \le 2d-7$. Since $\NL([L_1]) \cap \NL([L_2]) \subset \NL(\gamma_a)$
 and both spaces are of the same dimension, we have $\NL([L_1]) \cap \NL([L_2]) = \NL(\gamma_a)_{\red}$.
 Hence, for a general $u \in \NL(\gamma_a)$, $L_1$ (resp. $L_2$)
 deforms to a line $L_{1,u}$ (resp. $L_{2,u}$) in $\mc{X}_u$ (use 
 the construction of $\mc{L}_1^{\mr{eff}}$ and $\mc{L}_2^{\mr{eff}}$ from the proof of Proposition \ref{prop01}). 
 Then, $\gamma_a$ deforms to $\gamma_{a,u}:=[L_{1,u}]+a[L_{2,u}]$.
 Hence, $\NL(\gamma_a)=\NL(\gamma_{a,u})$. Similarly, as before, we get \[\codim T_u\NL(\gamma_a)= \codim T_u\NL(\gamma_{a,u}) \le 2d-7.\]
 This proves our claim.

 If for $a, a' \ge 2$ with $a \not= a'$, we have $\NL(\gamma_a)=\NL(\gamma_{a'})$, then clearly
 \[\ov{\NL(\gamma_a)}=\ov{\NL(\gamma_{a'})} \subset \ov{\NL([L_1]) \cap \NL([L_2])}.\]
 Since the three spaces have the same dimension and $\ov{\NL([L_1]) \cap \NL([L_2])}$ is reduced (use Proposition \ref{lem4}
 combined with Proposition \ref{prop01}),
 they coincide. But, $\ov{\NL([L_1]) \cap \NL([L_2])}$ is reduced and $\NL(\gamma_a), \NL(\gamma_{a'})$
 are generically non-reduced, as observed earlier. This gives a contradiction, hence $\NL(\gamma_a) \not= \NL(\gamma_{a'})$.
\end{proof}

\begin{rem}
After Theorem \ref{th2}, the remaining parts of Theorem \ref{th1} follows directly from \cite[Theorem $0.2$]{v3}.
\end{rem}

 %  By \cite[Th\'{e}or\`{e}me]{v2}, $\ov{\NL([L_1])}=\ov{\NL([L_2])}$ is the image under the natural projection morphism 
%  from the flag Hilbert scheme $\mr{Hilb}_{P_0, Q_d}$ to $U_d$. Therefore, $\ov{\NL([L_1]) \cap \NL([L_2])}$ is the image of an irreducible component 
%  of $\mr{Hilb}_{P_0, Q_d} \times_{U_d} \mr{Hilb}_{P_0, Q_d}$ under 
%  the natural projection morphism 
%  \[\mr{Hilb}_{P_0, Q_d} \times_{U_d} \mr{Hilb}_{P_0, Q_d} \to U_d.\]
%  (need explaination??) Since a general smooth, degree $d$ surface containing two coplanar lines do not contain 

\end{document}